\documentclass{gtmon_a}
\pdfoutput=1



\proceedingstitle{The Zieschang Gedenkschrift}
\conferencestart{5 September 2007}
\conferenceend{8 September 2007}
\conferencename{Conference in honour of Heiner Zieschang}
\conferencelocation{Toulouse, France}

\editor{Michel Boileau}
\givenname{Michel}
\surname{Boileau}

\editor{Martin Scharlemann}
\givenname{Martin}
\surname{Scharlemann}

\editor{Richard Weidmann}
\givenname{Richard}
\surname{Weidmann}

\title{Free group automorphisms with many fixed points at infinity}

\author[Andr\'e J\"ager and Martin Lustig]{Andr\'e J\"ager\newline Martin Lustig}
\givenname{Andr\'e}
\surname{J\"ager}
\address{Louis Pasteur-Strasse 58\\60439 Frankfurt am
  Main\\Germany\vspace{3pt}\\\newline
Math\'ematiques (LATP)\\Universit\'e P C\'ezanne -- Aix
Marseille III\\\newline
Ave E Normandie-Niemen\\13397 Marseille 20\\France}
\email{Andre.Jaeger@ui-gmbh.de}
\urladdr{}

\givenname{Martin}
\surname{Lustig}
\email{Martin.Lustig@univ-cezanne.fr}
\urladdr{}

\volumenumber{14}
\issuenumber{}
\publicationyear{2008}
\papernumber{14}
\startpage{321}
\endpage{333}

\doi{}
\MR{}
\Zbl{}

\arxivreference{}

\keyword{free group}
\keyword{automorphism}
\keyword{fixed point at infinity}

\subject{primary}{msc2000}{20E36}
\subject{secondary}{msc2000}{57M05}

\received{27 June 2006}
\revised{5 July 2007}
\accepted{31 July 2007}
\published{29 April 2008}
\publishedonline{29 April 2008}
\proposed{}
\seconded{}
\corresponding{}
\version{}


\makeatletter
\def\cnewtheorem#1[#2]#3{\newtheorem{#1}{#3}[section]
\expandafter\let\csname c@#1\endcsname\c@thm}
\makeatother


\let\xysavmatrix\xymatrix
\def\xymatrix{\disablesubscriptcorrection\xysavmatrix}
\AtBeginDocument{\let\bar\wbar\let\tilde\wtilde\let\hat\what}
\makeop{ind}
\makeop{Aut}
\makeop{Out}
\makeop{Fix}
\makeop{Stab}
\def\rk{\mathrm{rk}}


\makeatletter
\def\cnewtheorem#1[#2]#3{\newtheorem{#1}{#3}[section]
\expandafter\let\csname c@#1\endcsname\c@theorem}
\makeatother


\newtheorem{theorem}{Theorem}[section]
\cnewtheorem{proposition}[theorem]{Proposition}
\cnewtheorem{lemma}[theorem]{Lemma}
\cnewtheorem{corollary}[theorem]{Corollary}
\cnewtheorem{claim}[theorem]{Claim}
\theoremstyle{remark}
\cnewtheorem{conjecture}[theorem]{Conjecture}
\cnewtheorem{question}[theorem]{Question}
\cnewtheorem{problem}[theorem]{Problem}
\cnewtheorem{definition}[theorem]{Definition}
\cnewtheorem{remark}[theorem]{Remark}
\cnewtheorem{example}[theorem]{Example}
\cnewtheorem{condition}[theorem]{Condition}
\cnewtheorem{Quote}[theorem]{Quote}
\cnewtheorem{aside}[theorem]{Aside}

\def\SS{{\mathcal S}}
\def\G{{\mathcal G}}


\def\bdry{\partial}


\begin{document}

\begin{htmlabstract}
A concrete family of automorphisms &alpha;<sub>n</sub> of the free group F<sub>n</sub>
is exhibited, for any n &ge; 3, and the following properties are
proved:  &alpha;<sub>n</sub> is irreducible with irreducible powers, has
trivial fixed subgroup, and has 2n-1 attractive as well as 2n
repelling fixed points at &part;F<sub>n</sub>.
As a consequence of a recent result of V Guirardel there can not be
more fixed points on &part;F<sub>n</sub>, so that this family provides the
answer to a question posed by  G Levitt.
\end{htmlabstract}

\begin{abstract}
A concrete family of automorphisms $\alpha_n$ of the free group $F_n$
is exhibited, for any $n \geq 3$, and the following properties are
proved:  $\alpha_n$ is irreducible with irreducible powers, has
trivial fixed subgroup, and has $2n-1$ attractive as well as $2n$
repelling fixed points at $\partial F_n$.
As a consequence of a recent result of V Guirardel there can not be
more fixed points on $\partial F_n$, so that this family provides the
answer to a question posed by  G Levitt.
\end{abstract}

\begin{asciiabstract}
A concrete family of automorphisms alpha_n of the free group F_n is
exhibited, for any n > 2, and the following properties are proved:
alpha_n is irreducible with irreducible powers, has trivial fixed
subgroup, and has 2n-1 attractive as well as 2n repelling fixed points
at bdry F_n.  As a consequence of a recent result of V Guirardel there
can not be more fixed points on bdry F_n, so that this family provides
the answer to a question posed by G Levitt.
\end{asciiabstract}

\maketitle

\section{Introduction}

Let $F_n$ be a free group of finite rank $ n \geq 2$. It is well known
that every automorphism $\alpha$ of $F_n$ induces a homeomorphism
$\bdry\alpha$ on the Gromov boundary $\bdry F_n\,$. Every fixed point
of $ \bdry \alpha $ is either attracting or repelling (= attracting
for $ \bdry \alpha^{-1}) $ or it belongs to $ \bdry \Fix(\alpha)$,
which embedds into $ \bdry F_n\,$, as the fixed subgroup $
\Fix(\alpha) = \{ w \in F_n \, |\, \alpha(w) = w \} $ is quasiconvex
in $F_n\,$.  Notice that $ \Fix(\alpha) $ acts on the set of
attracting fixed points $ \Fix^+(\bdry \alpha) $ of $ \bdry
\alpha$. After various proofs that $\Fix(\alpha)$ is finitely
generated and that $ \Fix^+(\bdry \alpha) / \Fix(\alpha) $ is finite
for all $\alpha \in \Aut(F_n)$ (see Gersten \cite{Ge}, Cooper
\cite{Co}, Goldstein and Turner \cite{GT}, Cohen and Lustig \cite{CL},
Paulin \cite{Pa}, Gaboriau, Levitt, and Lustig \cite{GLL} etc), the
following improvement of Bestvina and Handel's Theorem \cite{BH} (also
known as the the Scott Conjecture) has been given by Gaboriau, Jaeger,
Levitt and Lustig \cite{GJLL}:
$$ \rk(\Fix(\alpha))+{1\over 2}\# (\Fix^+(\bdry \alpha)/ \Fix(\alpha)) \
 \leq \ n $$
It follows in particular that, if $ \Fix(\alpha) $ is trivial, then the total
set of fixed points $ \Fix(\bdry \alpha) = \Fix^+(\bdry \alpha) \cup 
\Fix^+(\bdry \alpha^{-1}) $ at $\bdry F_n$ is finite and satisfies
$$\# \, \Fix(\bdry \alpha)\ \leq\ 4n.$$ It seems a natural question
(posed originally to us by G Levitt) to ask whether automorphisms
exist with trivial fixed subgroup which satisfy equality in this last
formula, and if not, what the best possible bound is. In particular,
one would like to know the answer to this question for the class of
{\it irreducible automorphisms $ \alpha $ with irreducible powers
(iwip)}, ie, $\alpha^t$ does not map any non-trivial proper free
factor of $ F_n\,$ to a conjugate of itself, for any $ t \geq 1$.
Since then, it has been shown by Guirardel \cite{Guir} (see also
Handel and Mosher \cite{HM}) that iwip automorphisms can never satisfy
equality, see \fullref{V.1}.

In view of these  results, this
paper gives an answer to Levitt's question. We consider the following 
family of automorphisms:
$$
\begin{array}{llll}
\alpha_n\co &F_n &\rightarrow & F_n \\  
&a_1&\mapsto & a_1 a_2 \ldots a_n \\
&a_2&\mapsto   &  a_2 a_1 a_2 \\
&a_3&\mapsto    & a_3 a_1 a_2 a_3 \\
& & \, \, \vdots \\
&a_n&\mapsto   &  a_n a_1 a_2 a_3 \ldots a_n . \\
\end{array}
$$

\begin{theorem}
\label{0.1} 
For any $n\geq 3 $ the automorphism $\alpha_n$ is
irreducible with irreducible powers, has trivial fixed subgroup, and has
precisely $4n-1$ distinct fixed points at $\bdry F_n\,$. Among these there are 
$2n-1$ attractive ones and $2n$ repelling ones.
The same is true for all positive powers of $\alpha_n\,$.
\end{theorem}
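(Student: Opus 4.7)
My plan is to realize $[\alpha_n]$ by a Bestvina--Handel train-track map on the rose $R_n$ with vertex $v$ and petals $a_1,\dots,a_n$, and to count fixed points at infinity via fixed directions at $v$. First I would verify the train-track property for $\alpha_n$ on $R_n$. At $v$, the derivative $D\alpha_n$ fixes each positive direction $D_i^+$ of $a_i$ (because every $\alpha_n(a_i)$ starts with $a_i$); reading off the terminal letters, $D\alpha_n$ also fixes $D_i^-$ for $2\le i\le n$ and sends $D_1^-$ to $D_n^-$, so the only illegal turn is $\{D_1^-,D_n^-\}$. A direct inspection shows that every turn interior to some $\alpha_n(a_i)$ mixes a positive and a negative direction, so the illegal turn is avoided; by induction the same holds in every $\alpha_n^k(a_i)$, confirming the train-track property for $\alpha_n$ and simultaneously for each positive power $\alpha_n^t$. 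The transition matrix $M$ has $M_{1j}=M_{j1}=1$ for all $j$, so $M^2$ is positive, hence $M$ is primitive with Perron--Frobenius eigenvalue $\lambda>1$. Ruling out periodic indivisible Nielsen paths (which cannot be assembled since no edge is fixed and the illegal turn never appears in any image of an edge-path), the Bestvina--Handel criterion yields iwip-ness of $\alpha_n$ and of every positive power.

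Since $v$ is the unique (fixed) vertex, no edge is invariant, and no periodic Nielsen loop exists, the standard correspondence forces $\Fix(\alpha_n^t)=1$ for all $t\ge 1$. For an expanding train-track map, each fixed direction at a fixed vertex produces a unique attracting fixed point of $\partial\alpha_n$ in $\partial F_n$, realized as $\lim_{k\to\infty}\alpha_n^k(a_i^{\pm1})$. The $n$ fixed directions $D_i^+$ together with the $n-1$ fixed directions $D_i^-$ (for $i\ge 2$) therefore account for exactly $2n-1$ distinct attracting fixed points.

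The main obstacle is producing $2n$ repelling fixed points, equivalently $2n$ attracting fixed points of $\partial\alpha_n^{-1}$. My plan is to compute an explicit Bestvina--Handel train-track representative of $[\alpha_n^{-1}]$ by a sequence of folds applied to the rose (guided by the illegal-turn structure of $\alpha_n^{-1}$) and then read off $2n$ fixed directions at the fixed vertices of the resulting graph. Verifying that the fold sequence terminates with a train track whose derivative has exactly $2n$ fixed directions is the combinatorial heart of the argument. Once $\#\Fix^+(\partial\alpha_n)\ge 2n-1$ and $\#\Fix^+(\partial\alpha_n^{-1})\ge 2n$ are in hand, Guirardel's upper bound of $4n-1$ on $\#\Fix(\partial\alpha_n)$ for iwip automorphisms with trivial fixed subgroup forces equality and yields the claimed counts. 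The same analysis passes to every positive power $\alpha_n^t$.
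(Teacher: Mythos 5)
Your overall skeleton --- the rose $R_n$ as train track representative, the single illegal turn at $\{a_1^{-1},a_n^{-1}\}$, the $2n-1$ fixed directions at the vertex, the primitive transition matrix, and the final squeeze of the lower bounds $2n-1$ and $2n$ against the upper bound $4n-1$ obtained from the index inequality of Gaboriau--Jaeger--Levitt--Lustig combined with Guirardel's theorem --- agrees with the paper and is sound as far as it goes. But the three steps carrying the real content are either asserted without proof or only announced as a plan. First, iwip-ness does \emph{not} follow from primitivity of the transition matrix plus absence of periodic Nielsen paths; there is no such ``Bestvina--Handel criterion'', and the implication is false in general. The paper must prove a separate Irreducibility Criterion (\fullref{IV.1}): one blows up the vertex of $R_n$ into part of the $1$--skeleton of a $(2n-1)$--simplex recording the gates, and iwip-ness is equivalent to the forward $f$--orbit of the new edges connecting all gate-vertices. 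Checking that connectivity for $\alpha_n$ (it hinges on the image of the one new edge joining the gate of the initial direction of $a_2$ to that of the terminal direction of $a_1$) is a genuine computation absent from your proposal.

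Second, your parenthetical reason for excluding INPs --- ``no edge is fixed and the illegal turn never appears in any image of an edge-path'' --- is not an argument: every expanding train track map fixes no edge and maps legal paths to legal paths, yet many such maps (any parageometric one, for instance the representative of $\alpha_n^{-1}$ in a suitable basis) do carry INPs. An INP here would be a path $\gamma_1\gamma_2^{-1}$ with $\gamma_1,\gamma_2$ legal, meeting at the illegal turn, and $f(\gamma_i)=\gamma_i\gamma_3$; ruling this out is the five-case analysis of \fullref{III.}, and it is precisely where the hypothesis $n\geq 3$ enters (Case~4 fails for $n=2$, where the fixed subgroup is in fact nontrivial). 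Without it you have neither $\Fix(\alpha_n)=\{1\}$ nor the absence of fixed conjugacy classes, and for the powers you must additionally exclude period-two INPs, ie paths with $f(\gamma_1)=\gamma_2\gamma_3$ and $f(\gamma_2)=\gamma_1\gamma_3$. Third, the $2n$ repelling fixed points --- which you yourself call the combinatorial heart --- are only promised: the proposed fold sequence for $[\alpha_n^{-1}]$ is never carried out. The paper sidesteps this entirely by observing that after replacing $a_1$ by $x_0=a_1^{-1}$ the automorphism $\alpha_n^{-1}$ is again \emph{positive}, so the rose is already a train track representative for it and the $2n$ attracting fixed words of $\partial\alpha_n^{-1}$ can be written down explicitly. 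As it stands, the proposal is a correct outline with the three decisive verifications missing.
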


 The result and some related material will be
 discussed in the last section of this paper.
 Note also that an earlier version of this paper,
 containing already the main result,
 was ciculated as preprint in 1998.

\section{The attracting fixed points of $\bdry\alpha_n$}
\label{I.}

Consider the following set of $ 2n -1 $ infinite words, notice that
they are all positive or negative and hence reduced, and check that they
are fixed by $ \alpha_n$. Here a {\it positive} (or a {\it negative}) word
is a word in the given basis with only positive (or only negative)
exponents. Similarly, a {\it positive} automorphism of $ F_n $ is an
automorphism for which the image of a given basis consists entirely of
positive words in this basis. 
$$\begin{array}{lll}
X_1 &= &a_1 a_2 a_3 \ldots a_n \alpha_n(a_2 a_3 \ldots a_n) 
\alpha_n^2(a_2a_3 \ldots a_n) \alpha_n^3(a_2 a_3 \ldots a_n) \ldots \\
	 X_2 &= &a_2 a_1 a_2 \alpha_n(a_1 a_2)\alpha_n^2(a_1 a_2)\alpha_n^3(a_1a_2)
\ldots \\
	 X_3 &= &a_3 a_1 a_2 a_3 \alpha_n(a_1 a_2 a_3)  \alpha_n^2(a_1 a_2 a_3) 
\alpha_n^3(a_1 a_2 a_3) \ldots \\
	 X_4 &= & a_4 a_1 a_2 a_3 a_4 \alpha_n(a_1 a_2 a_3 a_4)  
\alpha_n^2(a_1 a_2a_3 a_4)  \alpha_n^3(a_1 a_2 a_3 a_4) \ldots \\
	     & \, \vdots \\
\end{array}
$$

$$\begin{array}{lll}
	 X_n &= & a_n a_1 a_2 a_3 \ldots a_n \alpha_n(a_1 a_2 a_3 \ldots a_n) 
\alpha_n^2(a_1 a_2 a_3 \ldots a_n)  \\ 
&&\alpha_n^3(a_1 a_2 a_3 \ldots a_n) \ldots \cr
	 Y_2 &= & a_2^{-1} a_1^{-1} a_2 ^{-1} \alpha_n(a_1^{-1} a_2 ^{-1}) 
\alpha_n^2(a_1^{-1} a_2 ^{-1})  \alpha_n^3(a_1^{-1} a_2 ^{-1}) \ldots       
\\
	 Y_3 &= & a_3^{-1} a_2^{-1} a_1^{-1} a_3 ^{-1} \alpha_n(a_2^{-1}
a_1^{-1} a_3 ^{-1})  \alpha_n^2(a_2^{-1} a_1^{-1} a_3 ^{-1}) \ldots \\
	 Y_4 &= & a_4^{-1} a_3^{-1} a_2^{-1} a_1^{-1} a_4 ^{-1}\alpha_n(a_3^{-1} 
a_2^{-1} a_1^{-1} a_4 ^{-1})\alpha_n^2(a_3^{-1} a_2^{-1} a_1^{-1} a_4 ^{-1}) \ldots\cr
	     &\, \vdots \\
	 Y_n &= & a_n^{-1} a_{n-1}^{-1} \ldots a_2^{-1} a_1^{-1} a_n^{-1}
\alpha_n(a_{n-1}^{-1} \ldots a_2^{-1} a_1^{-1} a_n^{-1})  \\
&&\alpha_n^2(a_{n-1}^{-1} 
\ldots a_2^{-1} a_1^{-1}        a_n^{-1}) \ldots \\
\end{array}
$$
As all $ \alpha_n(a_i) $ are positive and of length greater or equal
to $2$, it is easy to see that for any finite initial subword $X'$ of
$X_i$ (or of $Y_i$) the word $ \alpha_n(X') $ is again an initial
subword of $ X_i $ (or of $ Y_i $), and it is strictly longer. Hence
all the above words define attractive fixed points of $ \bdry
\alpha_n$, see \cite[Section I]{GJLL}. From the sign of the
exponents and from the initial letter it is easy to observe that they
are pairwise distinct.

We will show in \fullref{III.}  that $ \Fix(\alpha_n^{-1}) =
\Fix(\alpha_n) =\{1\}$.  Actually, we will show in Section V that
there are non-trivial fixed and not even periodic conjugacy classes of
$ \alpha_n\,$. Hence, in view of the inequality from \cite{GJLL}
stated in the Introduction, it could theoretically be that $\alpha_n$
or a power of $ \alpha_n $ has one more attractive fixed point on
$\bdry F_n\,$. However, for a proper power of $ \alpha_n $ this
couldn't be the case, as then the whole $ \alpha_n$--orbit of this
point would be fixed, thus giving more attractive fixed points than
the above inequality from \cite{GJLL} allows. For $ \alpha_n $ itself
this is excluded by the fact that there are only $ 2n - 1 $ total
occurences of any $ a_i $ in any reduced word $ \alpha_n(a_i)$, and
this number is an upper bound for the number of $
\Fix(\alpha_n)$--orbits of attracting fixed points in $ \bdry F_n\,$,
as has been shown in \cite[Theorem 2]{CL} (where one uses of
\cite[Proposition 1.1]{GJLL} for translation into our terminology).

\section{The repelling fixed points of $\bdry \alpha_n$}
\label{II.}

In order to compute the inverse of $\alpha_n$ we first define iteratively
$x_0 = a_1^{-1}$ and, for any $k$ with $0\leq k\leq n-1$, $x_{k+1} = 
a_{n-k} x_k^2\,.$ We now notice that:
$$
\begin{array}{lll}
\alpha_n(x_0) &= &{(a_1 a_2 \ldots a_n)}^{-1}, \\
		\alpha_n(a_n x_0) &= & a_n, \\
		\alpha_n(x_1) &= & {(a_1 a_2 \ldots a_{n-1})}^{-1}, \\
		\alpha_n(a_{n-1} x_1) &= & a_{n-1}, \\
				&\, \vdots \\
\end{array}
$$
$$
\begin{array}{lll}
		\alpha_n(x_{n-2}) &= & {(a_1 a_2)}^{-1}, \\
		\alpha_n(a_2 x_{n-2}) &= & a_2, \\
		\alpha_n(x_{n-1}) &= & a_1^{-1} \\
\end{array}
$$
Hence $\alpha_n^{-1}$ is given by $a_1 \mapsto x_{n-1}^{-1},\
		a_{n-k} \mapsto a_{n-k} x_k\ (k=0,\ldots,n-2).$
It is easy to see from the above computations that, if we replace the
basis element $a_1$ by its inverse $a_1^{-1} = x_0$, one obtains
$\alpha^{-1}$ again as positive automorphism, with respect to the new
basis $\{x_0, a_2, a_3, \ldots, a_n \}$.

In order to describe the attractive fixed points of $\alpha_n^{-1}$, we need 
some further notation. Define
\begin{align*}
y_k &= x_{n-1} x_k^{-1} x_0^{-1}\ \quad (k= 0, \ldots, n-2) \\
y &=  x_{n-1} x_0^{-1}\tag{$*$}\label{star}  \\
z &=  x_0^{-1} a_n^{-1} a_{n-1}^{-1} \ldots a_2^{-1}x_{n-1} 
\end{align*}
and notice that these are all positive words in the above defined new basis. 
We now define the infinite words
\begin{gather*}
X_k = a_{n-k} x_k \alpha_n^{-1}(x_k) \alpha_n^{-2}(x_k)\alpha_n^{-3}(x_k)\ldots\\
\tag*{\hbox{and}}
Y_k = a_{n-k} x_0^{-1} y_k^{-1} \alpha_n^{-1}(y_k^{-1})
\alpha_n^{-2}(y_k^{-1}) \alpha_n^{-3}(y_k^{-1}) \ldots
\end{gather*}
for $ k = 0, \ldots, n - 2\,$, as well as
\begin{gather*}
Y = x_0^{-1} y^{-1} \alpha_n^{-1}(y^{-1})
\alpha_n^{-2}(y^{-1}) \alpha_n^{-3}(y^{-1}) \ldots\\ 
\tag*{\hbox{and}}
Z=x_0^{-1} x_{n-1} \alpha_n^{-1}(z) \alpha_n^{-2}(z)\alpha_n^{-3}(z) \ldots
\end{gather*}
We first compute that these words are all fixed by $ \alpha_n^{-1} $:
For the $X_k$, the $Y_k$ and $Y$ this follows directly
from the given definition of $ \alpha_n^{-1}$, using in particular 
$\alpha_n^{-1}(x_0) = x_{n-1} $ and the definitions \eqref{star}. For $Z$ it
follows from the following computation:         
$$
\begin{array}{lll}
	\alpha_n^{-1}( x_0^{-1} x_{n-1})     &= & x_{n-1}^{-1}
\alpha_n^{-1}(a_2 \ldots a_n) x_{n-1} \alpha_n^{-1}(z) \\
		&= &x_{n-2}^{-2} a_2^{-1} \alpha_n^{-1}(a_2)
\alpha_n^{-1}(a_3 \ldots a_n) x_{n-1} \alpha_n^{-1}(z) \\
		&= &x_{n-2}^{-1} \alpha_n^{-1}(a_3 \ldots a_n) x_{n-1} \alpha_n^{-1}(z) \cr
		&= &x_{n-3}^{-2} a_3^{-1} \alpha_n^{-1}(a_3)
\alpha_n^{-1}(a_4 \ldots a_n) x_{n-1} \alpha_n^{-1}(z) \\
		&= &x_{n-3}^{-1} \alpha_n^{-1}(a_4 \ldots a_n) x_{n-1} \alpha_n^{-1}(z) \cr
		& \, \vdots \\
		&= &x_1^{-1} \alpha_n^{-1}(a_n) x_{n-1} \alpha_n^{-1}(z) \\
		&= & x_0^{-1} x_{n-1} \alpha_n^{-1}(z) \\
\end{array}
$$
The fact that all these words are $\alpha_n^{-1}$--attracting is a direct
consequence from the above observation that the words defined in \eqref{star}
are positive in the new basis.

Observe next that these infinite words are pairwise distinct:  The
words $ X_k $ and $ Z $ are all eventually positive and start with a
different letter (notice that the initial letter $ x_0^{-1} $ of $ Z $ is not
cancelled), and the same is true for the remaining ones, which are all
eventually negative.
Notice however that, for $ n = 2$, the two words $ Y_0 $ and $Y$ are 
related by the equation 
$$Y_0=a_2 x_0^{-1} a_2^{-1} x_0 Y,$$
and $ a_2 x_0^{-1} a_2^{-1} x_0 \in \Fix(\alpha_2) =\Fix(\alpha_2^{-1})$. 
In order to show that no such phenomenon occurs
for $ n \geq 3 $ it will be proved in Section III  that $ \Fix(\alpha_n) =
\Fix(\alpha_n^{-1}) $ is trivial. This implies, for $ n \geq 3$, that $
\bdry \alpha_n^{-1} $ has $ 2n $ attracting fixed points on $ \bdry F_n
$ which are all in distinct $ \Fix(\alpha_n^{-1})$--orbits.

\section{The fixed subgroup of $ \alpha_n$}
\label{III.}

In order to determine the fixed subgroup of $\alpha_n$ we use the
train track methods of  Bestvina and Handel \cite{BH}. As $\alpha_n$ is
positive, it follows that the standard rose $R_n$ with $n$ leaves
admits a train track representative $f\co R_n \to R_n$ of $\alpha_n\,$,
given simply by realizing the words $ \alpha_n(a_k) $ as reduced paths
in $ R_n\,$, with the unique vertex $\ast$ of $ R_n $ as initial and
terminal point.

Recall \cite{BH} that any conjugacy class $ [w] $ of $ F_n $ fixed
by the outer automorphism $\hat\alpha_n $ defined by $\alpha_n$ is
represented in the train track representative $ R_n $ by a loop $\gamma$ 
which is a concatenation of indivisible Nielsen paths (INP's).
Hence, in order to show that $ \Fix(\alpha_n) = \{1\} $, it suffices to show
that $ f $ does not have any INP's. For this purpose we first check for
illegal turns in $R_n$: A straight forward inspection, comparing
initial and terminal subwords of the $ \alpha_n(a_k) $ reveals that there
is only one illegal turn, given by $ (\bar a_1,\bar a_n)$.
Any  INP  in $ R_n $ must be of the form $\gamma_1 \gamma_2^{-1}$, such 
that $ \gamma_1 $ and $ \gamma_2$ are legal paths which both have terminal 
point at $\ast$ and define
there the above illegal turn. Hence one of the $\gamma_i\,$, say $\gamma_1$, 
ends in $a_1$, while the other one, $ \gamma_2\,$, ends in $a_n\,$. 
Their $f$--images have to be legal paths of the form $f(\gamma_1) = 
\gamma_1 \gamma_3 $ and $ f(\gamma_2) = \gamma_2\gamma_3$. Thus $ \gamma_3 $ 
ends in $ a_1 a_2\ldots a_n\,$. 

\medskip
{\bf Case 1}\qua $ \gamma_3 = a_1 a_2 \ldots a_n\,$.
It follows that the second to last letter in $ \gamma_1$, which preceeds 
$a_1$, has to have $\alpha_n$--image with terminal letter equal to $a_1$. But 
no such $ a_k $ exists!  

It follows that $\gamma_3$ ends in $a_n a_1 a_2 \ldots a_n\,$.
Then the second to the last letter in $ \gamma_1 $, 
preceding $ a_1 $, must be $a_n$ or $ a_1$.

\medskip
{\bf Case 2}\qua $\gamma_3 = a_n a_1 a_2 \ldots a_n\,$. 
Then in either of the last two subcases
 the last letter of $ \gamma_1 $ would have to be 
$a_{n-1}$, contradicting the above conditions.

It follows that the second to last letter of $ \gamma_2 $ must 
be $a_{n-1}\,$, and that $\gamma_3$ ends 
in $a_1 a_2 \ldots a_n a_1 a_2 \ldots a_n \,$.

\medskip
{\bf Case 3}\qua $ \gamma_3 = a_1 a_2 \ldots a_n a_1 a_2 \ldots a_n \,$. 
In this case the last letter of $ \gamma_2 $ is $ a_{n-1}$, again
contradicting the above conditions.

It follows that the second to last letter of $ \gamma_1 $ is not
$\alpha_n$ but $ a_1$, and the letter before must be $ a_{n-1}$.
At this point we know that $\gamma_3$ ends in 
$a_{n-1} a_1 a_2 \ldots a_n a_1 a_2 \ldots a_n \,$.

\medskip
{\bf Case 4}\qua $ \gamma_3 = a_{n-1} a_1 a_2 \ldots a_n a_1 a_2 \ldots a_n \,$.
Then the last letter of $ \gamma_1 $ would be $ a_{n-2}\,$,
contradicting the above conditions.
It follows that the third to the last letter in $ \gamma_2 $ is $ a_{n-2}\,$.
But then the only one possibility left is:

\medskip
{\bf Case 5}\qua $ \gamma_3 = a_1 a_2 \ldots a_{n-1} a_1 a_2 \ldots a_n a_1
a_2 \ldots a_n \,$.
Here the last letter of $ \gamma_1 $ would be $ a_{n-1}$,
contradicting the above conditions. 

\medskip
\noindent Notice that the argument in case $ 4 $ requires $ n \geq 3$.

This sweeps out all possibilities, and hence proves that there is no
INP  in $ R_n $ with respect to the train track map $f$, for $n\geq 3$. 

In Section V we will also consider the question of whether there
exists a path $\gamma_1 \gamma_2^{-1}$ in $R_n$ such that both
$\gamma_i$ are legal, and $f(\gamma_1) = \gamma_2 \gamma_3$, 
$f(\gamma_2) = \gamma_1 \gamma_3$.  The reader can check
without much difficulty, following precisely the same cases as
above, that such paths do not exist either.

\section{The irreducibility of $ \alpha_n $}
\label{IV.}

If $\alpha_n$ or a positive power of it were reducible, then there
would be a 
non-trivial 
proper free factor $F_m$ of $F_n$ which is left invariant
(up to conjugation)
by 
$\alpha_n^t$, for some $t\geq 1$. Passing over to an even higher power
and restricting possibly to another proper free factor of $ F_m $ we
can then assume that either $\alpha_n^t $ induces the trivial outer
automorphism on $F_m$, or else $\alpha_n^t\vert_{F_m}$ is
irreducible with irreducible powers. The first case is excluded by our
results in Section V, as then $ \alpha_n $ would have at least one 
non-trivial periodic conjugacy class. 
To rule out the second case we have to apply the following {\it
irreducibility test}, compare Bestvina and Handel \cite{BH} or
Lustig \cite{Lu2,Lu3}:

Let $f\co \Gamma \to \Gamma $ be a train track map in the sense
of \cite{BH}. Replace every vertex $v$ in $\Gamma $ by the 1--skeleton of
a $(k-1)$--simplex $\sigma(v)$, where $ k $ is the number of
edge 
gates at  $v$. 
(Recall that two edge germs $dE$ and $dE'$
raying out of a vertex $v$ belong to the same 
gate if and only if for some $t \geq 1$ the paths $f^t(E)$ and 
$f^t(E')$ have a non-tivial common initial subpath.)
This replacement is done by
glueing each such edge germ $ d E_i$ 
to the vertex $ v(d E_i) $ of $\sigma(v)$ which represents the gate to 
which $dE_{i}$ belongs. Now extend 
$f$ by mapping every edge $e$ of $\sigma(v) $ which connects $v(d E_i)$ 
to $v(d E_j)$
to the edge of $ \sigma (f(v)) $ which connects $ v(f(d E_i))
$ to $ v(f(d E_j))$. If $ f(d E_i) = f(d E_j)$, then map the
whole edge $e$ to the vertex $v(f(d E_i))$. Change the definition of $f$ 
along the edges of $\Gamma $ so that for any edge $ E_i $ of
$\Gamma $ the image is a reduced path in the new graph which agrees
with the old $ f(E_i) $ up to inserting precisely one of the ``new'' edges
(ie, the ones from the 1--skeletons of the $(k-1)$--simplices 
$\sigma(v)$) between
any two ``old'' edges which are adjacent in $ f(E_i)$. This defines a new
graph $\Gamma_1 $ and a new map $f_1\co \Gamma_1 \to\Gamma_1$.

We now omit from $\Gamma_1 $ all edges from the $(k-1)$--simplices $\sigma(v)$
which are 
not contained in any image $ f^t_1(E_i) $, for
any of the old edges $ E_i\,$ and $t \geq 1$.
Notice that this is done by a finite check, as $f_1$ is eventually
periodic on the new edges. The resulting graph $\Gamma_2 $ admits
a self map $f_2=f_1\vert_{\Gamma_2}\co \Gamma_2 \to 
\Gamma_2\,$,
and it is easy to see that $ f_2 $ inherits from $f$ the properties of a
train track map. Obviously there is a canonical map $\theta\co  \Gamma_2
\to \Gamma$, defined by 
the inclusion $\Gamma_{2} \subset \Gamma_{1}$ and subsequent
contraction of every new edge of $\Gamma_{1}$. Our
definitions give directly $f_2 \theta = \theta f$,
up to possibly reparametrizing $f$ along the edges.

\begin{proposition}[Irreducibility Criterion]
\label{IV.1}
Let $f\co \Gamma \to \Gamma $ 
be a train track map 
in the sense of {\rm\cite{BH}}, assume that its transition matrix is irreducible 
with irreducible powers,
and assume also that no $f_\ast^t$ with $t \geq 1$
fixes elementwise a
proper free factor of $\pi_1\Gamma\,$, up to conjugacy. 
Then 
$f_\ast \in Out(\pi_1\Gamma)$ is an irreducible automorphism 
with irreducible powers 
if and only if the induced map $ \theta_\ast\co 
\pi_1\Gamma_2 \to \pi_1\Gamma $ on the fundamental groups is surjective.
\end{proposition}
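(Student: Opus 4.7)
The plan is to make $H := \theta_{\ast}(\pi_{1}\Gamma_{2}) \subseteq \pi_{1}\Gamma$ (well defined up to conjugation of basepoint) the key object: I will argue that $H$ is always a free factor of $\pi_{1}\Gamma$ that is invariant under $f_{\ast}$ up to conjugation, after which the proposition follows by matching the iwip property of $f_{\ast}$ with the dichotomy $H = \pi_{1}\Gamma$ versus $H$ a proper free factor.

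The $f_{\ast}$-invariance of $H$ is immediate from the semi-conjugacy $f \circ \theta = \theta \circ f_{2}$ (read with the correct variance, since $\theta\co\Gamma_{2}\to\Gamma$): applying $\pi_{1}$ gives $f_{\ast}(H) = \theta_{\ast}f_{2\ast}(\pi_{1}\Gamma_{2}) \subseteq H$. The main technical step is the free-factor claim. Here I would analyse $\theta$ as a quotient map which is the identity on each old edge and collapses each subgraph $N_{v} := \sigma(v) \cap \Gamma_{2}$ onto the single vertex $v$ of $\Gamma$. Factor $\theta$ through three stages: (a) collapse a maximal spanning subforest of $N := \bigcup_{v} N_{v}$, a homotopy equivalence; (b) collapse each remaining new edge, which after (a) is a loop, each collapse killing one free generator of $\pi_{1}$; (c) for each $v$ whose $N_{v}$ had more than one component, identify the corresponding distinct vertices of the intermediate graph to the single vertex $v$, which is homotopically the attachment of a bouquet of $(k-1)$ circles and thus embeds the current $\pi_{1}$ into the new one as a free factor. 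Composing, $H$ sits in $\pi_{1}\Gamma$ as a free factor.

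With these two facts, $(\Rightarrow)$ is immediate: iwip-ness of $f_{\ast}$ forbids any proper nontrivial $f_{\ast}^{t}$-invariant free factor, so $H$ must equal $\pi_{1}\Gamma$, i.e.\ $\theta_{\ast}$ is surjective (the case $H = 1$ is excluded since irreducibility of the transition matrix prevents $\Gamma_{2}$ from carrying no essential loops). For $(\Leftarrow)$ I would argue by contradiction: if $\theta_{\ast}$ is surjective but some $f_{\ast}^{t}$ preserves the conjugacy class of a proper nontrivial free factor $A$, then, by the hypothesis ruling out elementwise fixing and passing to a further power, $f_{\ast}^{t}$ acts as an iwip on $A$. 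Using the Bestvina--Handel train-track machinery, the $A$-dynamics of $f^{t}$ on $\Gamma$ produces a proper $f^{t}$-invariant combinatorial subsystem that is visible as a proper sub-blowup of $\Gamma_{2}$, whose image under $\theta_{\ast}$ is contained in $A$ and therefore strictly smaller than $\pi_{1}\Gamma$, contradicting surjectivity of $\theta_{\ast}$.

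The main obstacle is the $(\Leftarrow)$ direction: translating an abstractly preserved free factor $A$ back into the combinatorial data of $\Gamma$ (and $\Gamma_{2}$) requires the full strength of the Bestvina--Handel formalism, and the hypothesis that no $f_{\ast}^{t}$ fixes a proper free factor elementwise is what rules out the indivisible Nielsen path obstructions that would otherwise prevent $A$ from materialising as a sub-blowup inside $\Gamma_{2}$.
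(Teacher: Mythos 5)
Your forward direction (iwip $\Rightarrow$ surjectivity of $\theta_\ast$) is essentially correct and genuinely different from the paper's argument: the observation that $H=\theta_\ast(\pi_1\Gamma_2)$ is a free factor of $\pi_1\Gamma$, invariant under $f_\ast$ up to conjugacy --- obtained by factoring $\theta$ into a forest collapse, loop collapses, and vertex identifications --- replaces the paper's construction of the blown-up tree $T_{3}$, the degenerate length function, and the Bass--Serre decomposition of the resulting simplicial tree $T_{3}^{*}$. That is a more elementary route for this implication, though you still owe an argument that $H$ is nontrivial: ``$\Gamma_2$ carries an essential loop'' is not automatic, since after the vertices are blown up into gates the old edges alone may well form a forest, and essential loops of $\Gamma_2$ can die under $\theta$.

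The converse direction, however, is a genuine gap, and it is the hard half of the proposition. The sentence asserting that an invariant proper free factor $A$ ``produces a proper $f^{t}$-invariant combinatorial subsystem that is visible as a proper sub-blowup of $\Gamma_2$'' is precisely the statement to be proved: there is no a priori reason why an abstractly given invariant free factor should manifest itself as a disconnection of the local gate-graphs $\sigma^{1}(v)\cap\Gamma_2$ of this particular train track map $f$. In the paper this is where all the machinery sits: the reducing free factor system yields a simplicial tree (the analogue of $T_{3}^{*}$) defined by a row eigenvector for the top stratum of a relative train track representative; the partial train track with Nielsen faces construction of \cite{Lu2} realizes both this tree and the expanding tree $T$ as degenerate pseudo-metrics on a common refinement $T_{3}$; and only then does the invariant zero-length subforest translate, via the map $i_{1}$, into a proper $f_{1}$-invariant subgraph of $\bigcup_v\sigma^{1}(v)$ with invariant complement $\Gamma_2$, i.e.\ into non-surjectivity of $\pi_1\Gamma_2\to\pi_1\Gamma_1$. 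None of this is reconstructed or replaced in your proposal; invoking ``the full strength of the Bestvina--Handel formalism'' is not a substitute for carrying out this translation. (Note also that the hypothesis that no power of $f_\ast$ fixes a proper free factor elementwise is used there to guarantee that the reduction produces an exponentially growing top stratum, hence a genuine eigenvector tree --- not, as you suggest, to rule out indivisible Nielsen path obstructions.)
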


\begin{proof}
We freely use in this proof some of the $\R$--tree technology 
from \cite{GJLL} and from \cite[Sections 3--5]{Lu3},
from which we also borrow the 
terminology.  In particular, we consider the 
$\alpha$--invariant $\R$--tree $T$ with stretching factor $\lambda > 1$ 
which is given by the (up to scalar multiples) well defined 
Perron--Frobenius row eigen vector $\vec v_{*}$ of the transition matrix $M(f)$ of 
the train track map $f$. It comes with an $F_{n}$--equivariant 
map $i\co  \tilde \Gamma \to T$ which is isometric on edges (and more 
generally on legal paths), if the universal covering $\tilde \Gamma$ is 
provided with edge lengths as given by $\vec v_{*}$. Furthermore, there is a 
homothety $H\co  T \to T$ with stretching factor $\lambda$, which 
$\alpha$--twistedly commutes with the $F_{n}$--action:
It satisfies $\alpha(w) H = H w\co  T \to 
T$ for all $w \in F_{n}$.  If $\tilde f$ is the lift of $f$ to 
$\tilde \Gamma$ that also 
$\alpha$--twistedly commutes with the $F_{n}$--action, then $H$ 
and $\tilde f$ commute via $i$, ie, $H i = i \tilde f$.

We now assume that the map $\theta$ is not surjective, ie, some of 
the 1--skeleta $\sigma^{1}(v)$ of the simplices $\sigma(v)$ decompose into more than 
one connected component, when passing from $\Gamma_{1}$ to 
$\Gamma_{2}$. We pass over to a new graph $\Gamma_{3}$ in the 
following way:

For each of the simplices $\sigma(v)$ we reconnect the
connected components of $\sigma^{1}(v) \cap \Gamma_{2}$ 
by adding a new {\it center vertex} $c(v)$ to 
$\Gamma_{2}$ and 
connecting each connected component by a {\it central edge} to $c(v)$. 
We extend the train track map $f_{2}$ canonically to
obtain again a train track map $f_{3}\co  \Gamma_{3} \to \Gamma_{3}$, and 
a ``projection map'' $\theta_{3} \co  \Gamma_{3} \to \Gamma$ with 
$\theta_{3} f_3 = f \theta_{3}$ (up to isotopy within the images of 
single edges). By construction, $\theta_{3*}$ is now surjective. 
Note that the map $f_{3}$ respects the partition of the edges of 
$\Gamma_{3}$ into edges from $\Gamma_{2}$ and central edges.

We consider the universal covering $\tilde \Gamma_{3}$ and the 
canonical $F_{n}$--equivariant map $i_{3}\co  \tilde \Gamma_{3} \to T$ 
obtained from composing the lift of $\theta_{3}$ to $\tilde \Gamma$
with the above map 
$i$. Just as for $\Gamma$ we can also consider the transition matrix 
for $f_{3}$ and obtain in the analogous way  Perron--Frobenius edge 
lengths on $\tilde \Gamma_{3}$ to make the map $i_{3}$ edge isometric.  Of 
course, the resulting ``metric'' on $\tilde \Gamma_{3}$ is only a 
pseudo-metric, as all of the newly introduced central edges will get 
Perron--Frobenius length 0.

The usefulness of these ``invisible'' central edges however becomes 
immediately appearent:  Each {\it multipod} $Y(\tilde v)$, 
consisting of the lift to $\tilde \Gamma_{3}$ of a central 
vertex $c(v)$
with all adjacent central edges, is mapped by $i_{3}$ to a single point 
$Q(\tilde v) = i(\tilde v)$ in $T$ (here $\tilde v 
\in \tilde \Gamma$ is the corresponding lift of the vertex $v \in \Gamma$),
and the directions at this point are in canonical bijection 
(given by the map $i_{3}$) with the gates at $\tilde v$ and hence with 
the endpoints of the 
 multipod $Y(\tilde v)$. We can $F_{n}$--equivariantly replace the 
point $Q(\tilde v)$ by the multipod $Y(\tilde v)$,
where every direction of $T$ at 
$Q(\tilde v)$ is attached at the corresponding endpoint of $Y(\tilde v)$.
Again, we 
define the edge lengths throughout $Y(\tilde v)$ to be 0, so that metrically 
the resulting tree $T_{3}$ is the same as $T$. 

We now observe that the homothety $H_{3}\co  T_{3} \to T_{3}$, which $T_{3}$ 
canonically inherits from $H\co  T \to T$, can be shown to map on one 
hand the union $Y$ of all $Y(\tilde v)$ to itself, but similarly 
also its complement 
$T_{3} \smallsetminus Y$. This follows  from the commutativity 
equality $i_{3} \tilde  f_{3} = H_{3} i_{3}$ which is by the above 
construction
inherited from 
the equation $i \tilde f = H i$, and from the above observation that 
the subgraph $\Gamma_{2}$ of $\Gamma_{3}$, as well as its complement 
$\Gamma_{3} \smallsetminus \Gamma_{2}$, is kept invariant 
under the map $f_{3}$.
As a consequence, we can invert the situation, by considering the 
length function (also a row eigen vector of $M(f_{3})$ !) which 
associates length 1 to every edge of $Y$, and length 0 to all other 
edges, ie, contracting every complementary component of $Y$ in 
$T_{3}$ to a point. The resulting space $T_{3}^{*}$ is a simplicial 
$\R$--tree with trivial edge stabilizers,
and the map $H_{3}$ induces an isometry $H_{3}^{*}\co  
T_{3}^{*} \to T_{3}^{*}$ which 
$\alpha$--twistedly commutes with $\tilde f$ and 
commutes with the induced map $i_{3}^{*}\co  \tilde \Gamma_{3} \to 
T_{3}^{*}$. It follows that 
the Bass--Serre decomposition of $F_{n}$ associated to this simplicial 
tree is $\alpha$--invariant. In particular, the vertex groups of this 
decomposition give a non-empty collection of 
non-trivial proper free factors of $F_{n}$ which is 
$\alpha$--invariant, proving directly that $\alpha$ is not iwip.

To prove the converse implication of the theorem we can now invert
every step in the construction given above:  If $\alpha$ is 
reducible and no positive power fixes elementwise a free factor,
there exists a simplicial tree as $T_{3}^{*}$, and this 
tree is given (compare \cite{GJLL})
by a row eigenvector for the top stratum of some relative train track 
representative $f_{0}\co  \Gamma_{0} \to \Gamma_{0}$ of $\alpha$
as in \cite{BH}. Modifying this 
train track representative as in \cite{Lu2} to get a partial train track 
representative with Nielsen faces $\phi\co  \G \to \G$,
allows us, as above for the graph 
$\Gamma_{3}$, to represent simultaneously
both, the action on $T_{3}^{*}$ as well as 
that on $T$, by row eigen vectors of $M(\phi)$.
As a consequence one sees that the 
two trees come from a common ``refinement'', as given above
by the tree $T_{3}$:
Both, $T_{3}^{*}$ and $T$, are defined by a pseudo-metric on $T_{3}$
which is troughout zero, on vice-versa complementary $H_{3}$--invariant
subforests of 
$T_{3}$. We now consider again the originally given 
train track map $f\co  \Gamma \to \Gamma$ and its local ``blow-up'' $f_{1}\co  
\Gamma_{1} \to \Gamma_{1}$. The $H_{3}$--invariance of the two 
subforests translates (via the map $i_{1}\co  \tilde \Gamma_{1} \to T$
induced by $i$) into a non-trivial $f_{1}$--invariant subgraph of the 
union of the $\sigma^{1}(v)$, with invariant complement $\Gamma_{2}$. 
The connected components of this graph $\Gamma_{2}$ are in 1--1 
correcpondence with the $F_{n}$--orbits of the zero-valued subforests of 
$T_{3}$ defined by the row-eigen vector that gives $T_{3}^{*}$. Thus 
the non-triviality of the latter translates directly into the fact 
that the injection $\pi_{1} \Gamma_{2} \to \pi_{1} \Gamma_{1}$ is 
non-surjective. This finishes the proof.
\end{proof}

\begin{remark}
\label{IV.2}
\rm
The Irreducibility Criterion (\fullref{IV.1}) can alternatively be derived
as consequence of the theory of limit
laminations and their fundamental group, as developed in \cite{Lu1}. 
We sketch now an outline of the ``if''-direction:

Reducibility of $f_\ast$ or some positive power would give, as above 
explained for $ \alpha_n\,$, a proper free factor $ F_m $ of $F_n$ on which 
$f_\ast^t$ for some $ t \geq 1 $ acts as irreducible automorphism with 
irreducible powers. Such an automorphism has an expanding limit lamination 
$L$ with $\pi_1 L \subset F_m$. As $F_m$ embeds as free factor into 
$ F_n$, say $\rho\co F_m \to F_n$, we obtain $\pi_1(\rho (L)) \subset 
\rho (F_m)\ne F_n$ 
(compare \cite[Lemma 9.7]{Lu1}). 
On the other hand, it follows from 
the irreducibility of the transition matrix of $  f\co  \Gamma \to \Gamma $ that
there is only one
expanding limit lamination $ L^\infty(f)$.  Hence $ L^\infty(f) = \rho (L)$,
and we can apply  \cite[Korollar 7.7]{Lu1} with $ \tau = \Gamma_2$ 
(provided with an appropriate combinatorial labeling which reflects 
$\theta_\ast$) to deduce $ \pi_1(L^\infty(f)) = F_n$ from the surjectivity
of $\theta_\ast$, thus yielding a
contradiction to the above derived statement $\pi_1(\rho (L)) \subset 
\rho (F_m)\ne F_n$.
\end{remark}

In order to apply the Irreducibility Criterion \ref{IV.1} 
to the automorphism $\alpha_n$ as given in the Introduction,
we first compute directly from the definition of the $\alpha_n(a_i)$
that the transition matrix of $f$ is irreducible with irreducible powers.
Then
we have to replace the vertex $\ast$ by part of the 1--skeleton of a 
$(2n-1)$--simplex $\sigma = \sigma(\ast)$. 
We start with the 0--skeleton, and introduce only those edges 
of $\sigma$ which are
contained in the $f_1$--image of any of the old edges.  This gives two
connected components, where one of them contains only the vertex 
associated to the initial germ of $a_2$ and the one associated
to the terminal germ of
$a_1\,$, as well as a single new edge, say $\eta$, which connects them.
The other component contains all other vertices and a tree which connects
them (with the vertex asociated to the initial germ of  $a_1$ as ``root''
of the tree).
Now we have to fill in the forward $f_1$--orbit of the new edges
introduced so far. 
But the $f_1$--image
of $\eta$ connects the vertex of the initial germ of $a_2$ to that
of the terminal germ of $a_n\,$, so that in $\Gamma_2$ the subgraph
which belongs to the $(2n-1)$--simplex $\sigma$ is connected.  Hence $\theta_\ast$
is surjective.

\section{End of the proof and some remarks}
\label{V.}

In this section we consider the outer automorphism 
$\hat\alpha_n$ induced by 
$\alpha_n\,$, and its inverse $\hat \alpha_n^{-1}$. In \cite{GJLL} an index for 
outer automorphisms of $ F_n $ has been defined as follows: 
Two automorphisms of $F_n$ are
called {\it isogredient} (or 
in \cite{GJLL} {\it similar}), if they are conjugated in $\Aut(F_n)$
by an inner automorphism of $F_n\,$.  Let 
$\SS(\hat\alpha )$ denote the set of  isogredience classes $[\alpha']$ of
automorphisms $\alpha'$ inducing the outer automorphism
$\hat\alpha'=\hat\alpha$. 
We define 
$$
\ind (\hat\alpha ):=\sum_{[\alpha']\in \SS(\hat\alpha)}
\max(\rk(\Fix(\alpha'))+
{1\over 2}\# (\Fix^+(\bdry \alpha')/ \Fix(\alpha'))-1, 0).
$$ 
The main result of \cite[Theorem $1'$]{GJLL}, is equivalent to the inequality
$$
\ind(\hat\alpha)\ \le\ \ n -1 
$$ 
for all $\hat \alpha \in \Out (F)$.

Now, the outer automorphism $ \hat \alpha_n^{-1} $ has maximal
possible index $ n - 1$, all concentrated in one isogredience class of $\hat
\alpha_n^{-1}$, namely the one given by $ \alpha_n^{-1}$, and here
again all concentrated in the term 
${1\over 2}\#(\Fix^+(\bdry \alpha_n^{-1}) /
\Fix(\alpha_n^{-1}))$, 
which counts the number of the attractive fixed points
at $ \bdry F_n\,$, as the fixed subgroup of $\alpha_n^{-1}$ is trivial.

We remark at this point that, if $X$ and $Y$ are infinite words, both
fixed by an automorphism $\alpha$, and $w X = Y$ for some $w \in F_n\,$,
then it follows
from an elementary combinatorial case checking that
$\alpha(w) = w$.  Hence we know
that the index contribution
of the  2n attracting fixed points of $\alpha_n^{-1}$ computed in Section II will 
be the same for all positive powers of $\alpha_n^{-1}$: 
On the other hand
(compare \cite{BH}), a fixed
non-trivial conjugacy class for some $\alpha_n^{-t}, \, t \ge 1$, will be represented 
by a concatenation of INP's of a train track representative of $\alpha_n^{-t}$,
which would contribute at least one infinite attracting fixed word in the same
isogredience class of $\hat\alpha_n^{-t}$ 
which fixes the non-trivial word read off from the concatenation of INP's. 
Hence we would get another
positive index contribution for $\hat\alpha_n^{-t}$, in contradiction to the above
inequality for the index.  Hence  $\alpha^{-1}$ and thus $\alpha$ can not have 
non-trivial periodic conjugacy classes.

\begin{remark}\label{V.1}
\rm
Outer automorphisms of $F_n$ with a positive power of index $n-1$ 
which are not geometric (ie, they are not induced by a 
homeomorphisms of a
surface with boundary) have been termed 
{\it para-geometric} in \cite[Section VI]{GJLL}, as, just 
as for geometric automorphisms, their action on 
any 
forward
limit tree is
geometric (in the sense of Gaboriau and Levitt
\cite{GL}).
Guirardel \cite{Guir} shows that if for an iwip automorphism $\alpha$
both, the (uniquely determined)
forward and the backward limit trees are geometric 
($\Longleftrightarrow \ind(\hat 
\alpha^t) = \ind(\hat 
\alpha^{-t}) = n-1$ for some sufficiently large $t \geq 1$), then 
$\alpha$ is geometric.
\end{remark}

To see whether the irreducible (and non-geometric !)
automorphism $\alpha_n$ itself is
parageometric or 
not we can either apply the result of Guirardel \cite{Guir} quoted 
in the Introduction, or else apply direct arguments which seem 
interesting in their own right, as they are typical for similar 
computations for many other automorphisms:

We will compute the index of $\alpha_{n}$ and
that of its positive powers: From the previous
sections we know already that there is one isogredience class, given by 
$\alpha_n\,$, which contributes $0$ from $\Fix(\alpha_n) $ and $2n-1$ from 
$\Fix^+(\bdry \alpha_n)$, adding properly up to an index
contribution of $\rk(\Fix(\alpha_n)) + {1\over 2}\#(\Fix^+(\bdry \alpha_n)/
\Fix(\alpha_n))-1 = n - {3\over 2}$. Hence the only possibility for 
$\hat\alpha_n $ to have index $n-1$ is if there is another isogredience class,
represented by some automorphism $ \alpha '_n\,$, with index
contribution of ${1\over 2}$. As we have shown above that there is no
non-trivial
conjugacy class fixed by $ \hat \alpha_n \,$, the only possibility is that this
$\alpha'_n $ has $3$ attracting fixed points at $\bdry F_n\,$. In this
case the train track representative $f\co R_n \to R_n $ of 
$\hat \alpha_n$ has to have either another fixed point with $3$ distinct 
fixed directions (= edge germs), but this is not the case as $R_n$ has only
one vertex. Otherwise there must be two distinct fixed points in $R_n\,$, each 
with $2$ fixed directions, and they are connected by an 
INP. But we have shown in Section III that INP's do not exist for 
$f\co R_n \to R_n\,$. Hence it follows that 
$\ind(\hat \alpha_n) = n - {3 \over 2}$. 

The same arguments apply to all positive powers of $ \alpha_n\,$, except that
we have to rule out also the possibility of {\it periodic INP's}: If there is an
INP for $\alpha_n^t$ which is not an INP for $\alpha_n\,$, then its whole 
$\alpha_n$--orbit consists of INP's for $\alpha_n^t \,$. As this would immediately
give a too large index for $\hat\alpha_n^t$ if the orbit consists of
more than one INP, the only possibility left is that there is
an INP for $\alpha_n^2\,$, and $\alpha$ fixes this path too, but reverses
its orientation. But this possibility has been ruled out in the last paragraph
of Section III. Thus $\alpha_{n}$ is not parageometric (and also not 
geometric).

To finish this discussion, we would like to point out a subtle point
in which the non-geometric and non-parageometric
$\alpha_n$ and the parageometric $ \alpha_n^{-1}$ 
differ, which is characteristic for their classes:

For any parageometric automorphism (as $\hat\alpha_n^{-1}$) there is a {\it stable} 
train track representative
with a single illegal turn, namely the one at the tip of the unique INP, see 
\cite{BH}.
If we keep folding at this illegal turn, we get iteratively smaller and smaller 
copies of the train track, thus realizing the inverse of the train track map by
``continuous iterated folding'' (compare \cite{LL}).  Now, if we 
consider the train track representative
$f \co R_n \to R_n$ of the (non-parageometric !) $\hat \alpha_n$ 
from Section III, there is also a single illegal turn, and
if we keep folding there, it turns out that 
this will always be the case, as  
there will never appear any other illegal turn.
Thus the
situation looks remarkably similar to that in the parageometric case. There is, 
however, an interesting difference: 
If we trace in $R_n$ (or rather in the universal covering $\tilde 
R_{n}$) the two ``paths'' which will be folded
together in this iterative folding procedure, we will see that these are not 
two continuous arcs with the same initial point (as would be true 
in the parageometric case,
given there by the two subarcs of the INP which meet at
the illegal turn), but much rather there will be lots of (indeed 
infinitely many !) discontinuities in 
these ``paths''.  Each of these will disappear eventually in the folding process,
but initially they are present.  We believe that in these discontinuities
the core information is encoded, for a
geometric understanding of the gap between the maximal index of 
a positive power of the automorphism and the above upper bound $n-1$.

\bibliographystyle{gtart}
\bibliography{link}

\end{document}